\newtheorem{Theo}{Theorem}
\newtheorem{Lem}{Lemma}
\newcommand{\Z}{\mathbb{Z}}
\begin{document}
\title[Automaticity of Primes]{The Automaticity of the Set of Primes}
\author[T. Dubbe]{Thomas Dubbe}
\begin{abstract}
The automaticity $A(x)$ of a set $\mathcal{X}$ is the size of the smallest automaton that recognizes $\mathcal{X}$ on all words of length $\leq x$. We show that the automaticity of the set of primes is at least $x\exp\left(-c(\log\log x)^2\log\log\log x\right)$, which is fairly close to the maximal automaticity.
\end{abstract}
\maketitle
\section{Introduction and Results}
A finite automaton over an alphabet $\mathcal{A}$ consists of a finite set $S$ of states, a starting state $s_0$ in $S$, a transition function $\delta:A\times S\rightarrow S$, and a set $T\subseteq S$ of accepting states. A word $w=a_1a_2\dots a_n$ over $\mathcal{A}$ induces the sequence of states $s_0, \ldots, s_n$, where $s_{i+1}=\delta(a_i, s_i)$, and $w$ is accepted if $s_n\in T$. A language $L\subseteq\mathcal{A}^*$ is called automatic, if there exists a finite automaton that accepts exactly the elements of $L$. For more background on finite automata we refer the reader to the book by Allouche and Shallit \cite{AS}.

If $\mathcal{A}=\{0, 1, \ldots, q-1\}$, then we can identify words over $\mathcal{A}$ with integers by means of the $q$-adic representation. Studying number theoretic properties of automatic sets of integers is a topic with quite some history. Hartmanis and Shanks \cite{HS} showed that the set of primes is not automatic. This leads to the question how far the set of primes deviates from an automatic set. One way to measure this is automaticity: We define the automaticity of a set $\mathcal{X}$ as the function $A(x)$ that associates to an integer $x$ the size of the smallest automaton accepting an integer $n\leq x$ if and only if $n\in\mathcal{X}$. Clearly, the automaticity of any set satisfies $A(x)\leq x$, and a set is automatic if and only if $A(x)=\mathcal{O}(1)$. Here we show that the set of primes is close to the maximal automaticity.

\begin{Theo}
There exists a constant $c$ such that the set of primes has automaticity at least 
\[x\exp\left(-c(\log\log x)^2\log\log\log x\right)\]
where $x$ is sufficiently large.
\end{Theo}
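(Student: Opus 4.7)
The plan is to apply a Myhill--Nerode lower bound. Set $n=\lfloor \log_q x\rfloor$, split $n=m+k$, and to each prefix $u\in\{0,\ldots,q^m-1\}$ associate the signature
\[V_u=\{v\in[0,q^k)\,:\,uq^k+v\text{ is prime}\}.\]
Any automaton recognising the primes in $[1,q^n]$ must assign the same state to two same-length prefixes only if their signatures agree, so $A(x)\ge \#\{V_u:u<q^m\}$. I would choose $k$ so that $q^k=\exp(c(\log\log x)^2\log\log\log x)$; then $q^m$ matches the target lower bound, and the task reduces to showing that almost all of the $q^m$ prefixes have pairwise distinct signatures.

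Call $u$ \emph{good} if the interval $[uq^k,(u+1)q^k)$ contains the expected number $r\asymp q^k/\log x$ of primes, and if the resulting admissible tuple $V_u$ has a singular series of controlled size. Since $q^k$ is super-polynomial in $\log x$, a short-interval almost-all theorem for primes (Selberg's mean-square bound and its refinements), combined with second-moment estimates for the singular series over typical intervals, shows that the good prefixes form a subset of density $1-o(1)$. Now if two good prefixes $u\ne u'$ satisfied $V_{u'}=V_u$, each $v\in V_u$ would force $u'q^k+v$ to be prime; this $r$-fold simultaneous primality condition is bounded by Selberg's upper-bound sieve as
\[\#\{u'<q^m:u'q^k+v\text{ prime for all }v\in V_u\}\ll\mathfrak S(V_u)\,q^m\,(C\log\log x/\log x)^r,\]
and with our choice of $k$ the exponent $r$ is so large that this quantity is $o(1)$. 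Hence the good prefixes have pairwise distinct signatures, giving $A(x)\ge(1-o(1))q^m$, which is the claimed lower bound.

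The main technical obstacle is the interplay between the two analytic inputs. The almost-all short-interval estimate must hold at a scale $q^k$ well below any currently known threshold for primes in every interval, so one must rely on mean-square bounds and accept a small exceptional set of ``bad'' prefixes. At the same time, the Selberg sieve has to be applied uniformly in the tuple size $r\to\infty$ and must tolerate the (inevitable) growth of $\mathfrak S(V_u)$ for the specific tuples that actually arise, forcing one to restrict attention to prefixes whose tuple is generic modulo the small primes. Pinning down the scale at which these two errors become comparable is precisely what produces the constant $c$ and fixes the factor $(\log\log x)^2\log\log\log x$ in the theorem.
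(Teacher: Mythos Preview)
Your sieve step is the fatal gap. With your choice $q^k=\exp\bigl(c(\log\log x)^2\log\log\log x\bigr)$ the signature has size $r=|V_u|\asymp q^k/\log x$, which is far larger than $\log x$. Any upper-bound sieve in dimension $r$ carries a combinatorial constant of order $2^r r!$ (equivalently $(Cr)^r$), so the correct bound for the number of $u'<q^m$ with $u'q^k+v$ prime for every $v\in V_u$ is of the shape $\mathfrak S(V_u)\,q^m\,(Cr/\log q^m)^r$, not $\mathfrak S(V_u)\,q^m\,(C\log\log x/\log x)^r$. Since $r\gg\log x$, the base $Cr/\log q^m$ exceeds $1$ and the estimate is vacuous; no choice of ``good'' $u$ with controlled $\mathfrak S(V_u)$ rescues this. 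A second obstacle is your short-interval input: you need almost all intervals of length $q^k=x^{o(1)}$ to contain $\asymp q^k/\log x$ primes, but unconditionally the almost-all results (Huxley and successors) require interval length at least $x^\theta$ for some fixed $\theta>0$; Selberg's mean-square bound at the scale you invoke is conditional on RH.

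The paper avoids both problems by reversing the roles of the two blocks and, more importantly, by applying the sieve in a different dimension. It reads the low-order $n-m$ digits $w$ first (so there are $x/y$ words, with $y=q^m$ small) and attaches the signature $A_w=\{a<y:aq^{n-m}+w\text{ prime}\}$. The sieve is \emph{not} applied in dimension $|A_w|$; instead, if $k$ distinct words $w_1,\dots,w_k$ share the same signature, then every $a\in A_{w_1}$ forces $aq^{n-m}+w_1,\dots,aq^{n-m}+w_k$ to be simultaneously prime, so a dimension-$k$ sieve bounds $|A_{w_1}|\le C_k\,y/(\log y)^k$ with $C_k\ll(Dk\log k\cdot\log\log x)^k$. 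One truncates at $K=\lceil\log\log x\rceil$, so the sieve dimension stays tiny. Then the global identity $\sum_w|A_w|\sim x/\log x$ (just the prime number theorem, no short intervals or progressions needed individually) together with the trivial bound $\#\{w\}\le x/y$ on the high-collision tail is solved for the number $N$ of distinct signatures; optimising $K$ and $y$ forces $\log y\asymp(\log\log x)^2\log\log\log x$ and yields the theorem. Your framework could in principle be salvaged by the same device---sieve on the collision multiplicity rather than on $|V_u|$, and replace the per-prefix hypothesis by the averaged count $\sum_u|V_u|$---but as written the argument does not go through.
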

We determine the number of states an automaton needs to correctly identify the prime numbers below $q^n$. 
Let $m<n$ be chosen later. We will show that a significant proportion of the $q^{n-m}$  words $w$ of length $n-m$ have the property that the set $A_w=\{a: 1\leq a<p^m, ap^{n-m}+w\mbox{ prime}\}$  essentially determines $w$. In particular, there is a large set $\mathcal{W}$ such that for $w, w'\in\mathcal{W}$ with $w\not= w'$ we have $A_w\neq A_{w'}$.  Now $|\mathcal{W}|$ is a lower bound for the size of an automaton recognizing primes up to $q^n$, since if we read two words $w,w'\in\mathcal{W}$, and reach states $s_w, s_{w'}$, then the states $s_w, s_{w'}$ determine the sets $A_w, A_{w'}$. Since the latter are different, so are the states, that is, $|\{s_w:w\in\mathcal{W}\}|=|\mathcal{W}|$ is a lower bound for the size of an automaton recognizing primes up to $q^n$.\\
I would like to thank Prof. Schlage-Puchta, who made me aware of this problem.

\section{Some number theory}

We will use the following consequence of the large sieve:

\begin{Lem} 
\label{Lem:Siebert}
Let $\mathcal{A}\subseteq[1, x]$ be a set of integers and $\Omega_p$ be sets of residue classes $(\bmod{p})$ for every prime. If we set $\omega(p)=|\Omega_p|$ and assume that $\omega(p) = k$ for all $p$ except for some finite cases where it is less than $k$, then 
\begin{align*}
\#\{a\in\mathcal{A}\;|\; (a\bmod{p})\not\in\Omega_p \;\text{for all}\;p\leq z\}\leq\\
2^kk!\prod_p\left(1-\frac{\omega(p)}{p}\right)\left(1-\frac{1}{p}\right)^{-k}\frac{x}{\log^kx}\left(1-\frac{r_k}{\log x}\right)^{-1}.
\end{align*}
for $x>e^{r_k}$, $z=\sqrt{\frac 2 3 x}$, $r_k = \mathcal{O}{(k^2\log k)}$ and $k\geq 2$
\end{Lem}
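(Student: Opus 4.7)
The plan is to reduce the estimate to the arithmetic form of the large sieve, and then to bound the resulting singular sum from below by a quantitative Wirsing--Halberstam--Richert estimate for sums of nonnegative multiplicative functions.

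First I would apply the Montgomery--Vaughan form of the large sieve to $\mathcal{A}$ with forbidden residues $\Omega_p$. Writing
\[
G(z)=\sum_{\substack{d\leq z\\ \mu^{2}(d)=1}}\prod_{p\mid d}\frac{\omega(p)}{p-\omega(p)},
\]
this gives
\[
\#\{a\in\mathcal{A}\;|\;(a\bmod p)\notin\Omega_p\text{ for all }p\leq z\}\leq\frac{x+z^{2}}{G(z)}.
\]
The reason for the specific choice $z=\sqrt{2x/3}$ is that it forces $x+z^{2}=\tfrac{5}{3}x$; the factor $\tfrac{5}{3}$ is independent of $k$ and so can be absorbed into the leading constant $2^{k}k!$.

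Next I would bound $G(z)$ from below. Since $\omega(p)/(p-\omega(p))\geq \omega(p)/p$, one has $G(z)\geq\sum_{d\leq z}\mu^{2}(d)f(d)$, where $f$ is the multiplicative function with $f(p)=\omega(p)/p$. Because $\omega(p)=k$ for all but finitely many primes, the Dirichlet series $\sum f(d)d^{-s}$ behaves like $\zeta(s+1)^{k}$ times a holomorphic factor, so a quantitative form of Wirsing's theorem (equivalently, Halberstam--Richert, Theorem~2.1) yields
\[
G(z)\geq\frac{(\log z)^{k}}{k!}\prod_{p}\left(1-\frac{\omega(p)}{p}\right)^{-1}\!\left(1-\frac{1}{p}\right)^{k}\!\left(1-\frac{r_{k}}{\log z}\right),
\]
with $r_{k}=O(k^{2}\log k)$: the factor $(\log z)^{k}/k!$ is the residue contribution and the Euler product is the associated singular series.

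Combining the two estimates and using $\log z=\tfrac{1}{2}\log x+O(1)$ to convert $(\log z)^{-k}$ into $2^{k}(\log x)^{-k}(1+O_{k}(1/\log x))$, the $k!$ from Wirsing, the $2^{k}$ from rescaling, and the $\tfrac{5}{3}$ from the large sieve combine into $2^{k}k!$, while the residual $O_{k}(1/\log x)$ correction is absorbed into $r_{k}/\log x$ by enlarging the implicit constant. The main obstacle, which forces the somewhat delicate error term $r_{k}=O(k^{2}\log k)$, is the quantitative bound in the second step: since in the application $k$ will be taken to grow with $x$, the $k$-dependence of the error in Wirsing's theorem has to be tracked carefully rather than hidden in an ``$O_{k}(1)$''; each of the $k$ Dirichlet-series derivatives at $s=0$ contributes a factor of $O(\log k)$, yielding the stated estimate.
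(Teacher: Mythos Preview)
The paper does not prove this lemma: it is quoted as a known result, with the case $k=2$ attributed to Siebert \cite{Siebert} (via Montgomery's weighted sieve) and the general case to Theorem~2 of the author's companion paper \cite{Dubbe}. Your sketch---the arithmetic large sieve followed by a Wirsing/Halberstam--Richert lower bound for $G(z)$, with the $k$-dependence of the error tracked explicitly---is precisely the standard route to such estimates and is in line with what those references do.

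One point to tighten: your remark that the factor $\tfrac{5}{3}$ coming from $x+z^{2}=\tfrac{5}{3}x$ ``can be absorbed into the leading constant $2^{k}k!$'' is not correct as written. Combining $\tfrac{5}{3}$ with the $2^{k}$ from $(\log z)^{-k}\approx 2^{k}(\log x)^{-k}$ and the $k!$ from the Tauberian step yields $\tfrac{5}{3}\cdot 2^{k}k!$, not $2^{k}k!$, and a fixed constant factor cannot be pushed into $(1-r_{k}/\log x)^{-1}$ since that tends to $1$ as $x\to\infty$. To land on exactly $2^{k}k!$ one needs a sharper treatment: keep $g(p)=\omega(p)/(p-\omega(p))$ throughout rather than dropping to $\omega(p)/p$ (the Euler product $\prod_p(1+g(p))=\prod_p(1-\omega(p)/p)^{-1}$ then falls out exactly), and carry out the lower bound for $G(z)$ with the care Siebert uses in his computation of the sum $L$, or alternatively use Montgomery's weighted form of the sieve rather than the crude $x+z^{2}$ numerator. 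For the application in this paper only the order of magnitude of $C_k$ matters, so the gap is cosmetic; but as written your sketch proves the inequality with a slightly larger leading constant than the one stated.
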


The case $k=2$ was essentially proven by Siebert\cite{Siebert}. Although he formulates his result only for the case of prime pairs $p, ap+b$, the relevant computation of the sum $L$ only makes use of the number of residue classes avoided, and applies to the above statement as well. Selberg's sieve can be used to replace the constant 16 by $8+\epsilon$, however, the above has the advantage of being completely uniform. The remaining cases are Theorem 2 of \cite{Dubbe}.
Put $x=q^n$, $y=q^m$. 
\begin{Lem}
\label{Lem:non unique}
Let $k\geq2$.
Suppose that $w_1,\ldots,w_k$ are different words of length $\leq n-m$, such that $A_{w_i}=A_{w_j}$ for $i\not=j$ and $(q,w_i)=1$. Then 
\[ |A_{w_i}| \leq 3\cdot2^kk!\prod_p\left(1-\frac{\omega(p)}{p}\right)\left(1-\frac{1}{p}\right)^{-k}\frac{y}{\log^ky}\left(1-\frac{r_k}{\log y}\right)^{-1},\]
for $y>e^{2r_k}$, where $\omega(p)$ is the number of residue classes modulo $p$, which contain at least one $w_i\;(\bmod{\;p})$.
\end{Lem}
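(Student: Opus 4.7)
The plan is to reduce the bound on $|A_{w_i}|$ to a single application of Lemma~\ref{Lem:Siebert}. Because all the sets $A_{w_j}$ coincide by hypothesis, each of them equals their common intersection, so I will use
\[ |A_{w_i}| = \#\bigl\{a \in \mathbb{Z} : 1 \leq a < y,\ aq^{n-m} + w_j \text{ is prime for every } j = 1, \ldots, k\bigr\}, \]
a $k$-tuple simultaneous primality count, exactly the shape handled by the large sieve.

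Next I would identify the residue data. For a prime $p$ not dividing $q$, primality of $aq^{n-m} + w_j$---together with the fact that $aq^{n-m} + w_j \geq q^{n-m}$ exceeds every sieve prime $p \leq z := \sqrt{2y/3}$ as soon as $n$ is modestly larger than $m$---forces $a \not\equiv -w_j q^{-(n-m)} \pmod p$. Because $-q^{-(n-m)}$ is a unit modulo $p$, the number of forbidden residues equals the number of distinct $w_j \pmod p$, which is exactly $\omega(p)$ as defined in the lemma. For primes $p \mid q$, the coprimality $(q, w_j) = 1$ gives $aq^{n-m} + w_j \equiv w_j \not\equiv 0 \pmod p$ automatically, so no constraint on $a$ arises. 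Since the $w_j$ are distinct integers bounded by $q^{n-m}$, one has $\omega(p) = k$ for all but finitely many $p$, matching the hypothesis of Lemma~\ref{Lem:Siebert}.

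I would then apply Lemma~\ref{Lem:Siebert} with $\mathcal{A} = \{1, 2, \ldots, y-1\}$, parameter $z = \sqrt{2y/3}$, and residue sets $\Omega_p = \{-w_j q^{-(n-m)} \bmod p : 1 \leq j \leq k\}$. This produces an upper bound of exactly the displayed form, without the factor $3$ in front. The $3$ in the statement should then absorb several minor losses: the handful of $a$ for which some $aq^{n-m} + w_j$ is itself a sieve prime, the difference between $|\mathcal{A}| = y-1$ and $y$, the primes dividing $q$, and the tightening of the factor $(1 - r_k/\log y)^{-1}$ under the strengthened hypothesis $y > e^{2 r_k}$.

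The main obstacle I expect is this bookkeeping: assembling the accumulated small errors into a clean constant factor of $3$, and confirming that the sharpened hypothesis $y > e^{2 r_k}$---as opposed to the weaker $y > e^{r_k}$ required by Lemma~\ref{Lem:Siebert}---is exactly what is needed. Conceptually, however, the lemma is a direct repackaging of the large sieve, once the simultaneous primality condition has been recognized as the correct $k$-dimensional sieve problem with the $w_j$'s determining the excluded residue classes.
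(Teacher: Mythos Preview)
Your proposal is correct and follows essentially the same route as the paper: recognize that the common set $A_{w_i}$ is a $k$-fold simultaneous primality count, identify the $\omega(p)$ forbidden residue classes per prime, verify that $\omega(p)=k$ for all but finitely many $p$, and apply Lemma~\ref{Lem:Siebert} directly. The only cosmetic difference is that the paper handles the exceptional small $a$ by an explicit additive term $y_0=z\,y/x$ (then folds it into the factor~$3$ using $y>e^{2r_k}$), whereas you invoke $q^{n-m}>z$ to make those exceptions vanish outright; in the regime of the application these are the same bookkeeping.
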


\begin{proof}
Let $\mathbb{P}$ be the set of prime numbers in the range $[1, x]$.

First, we note that an integer $a$ fulfills the condition $a(x/y)+w_i\in\mathbb{P}$ if and only if it satisfies $a(x/y)+w_j\in\mathbb{P}$ for every other $j\not= i$. This is due to the assumption that $A_{w_i}=A_{w_j}$ for $i\not=j$. So we have 
\[A_{w_i} = \{1\leq a <y\;|\; a(x/y)+w_i\in\mathbb{P}\;\text{for}\;i=1,\ldots,k\}.\]
We set $z=\sqrt{2/3y}$ and $y_0 = z\cdot y/x\leq z$. Suppose we have an integer $y_0 < a <y$ so that $q = a(x/y) + w_i\in\mathbb{P}$. Such $q$ lies in the range $(z,x]$ and is obviously coprime to every prime $p\leq z$. Therefore, $q \not\equiv 0\;(\bmod{\;p})$ and the size of the set $A_{w_i}$ is bounded by $y_0 + \#\{y_0\leq m< y\;|\;\left[m\right]_p\not\in\Omega_p\;\text{for all}\; p\leq z\}$, where $\Omega_p$ contains all solutions $m\;(\bmod{\;p})$ of $m(x/y)+w_i\equiv 0\;(\bmod{\;p})$ for $i=1,\ldots,k$. Of course, we have at most $k$ different solutions modulo $p$ and hence $\omega(p)\leq k$. On the other hand, we see that a solution modulo $p$ appears twice iff we have $w_i\equiv w_j\;(p)$ or $p|w_i-w_j$, respectively. This is only possible for small primes $p<x/y$, since all $w_i$ are words of length $\leq n-m$ and therefore $0 < |w_i-w_j| < x/y$. Consequently, the assumptions of Lemma~\ref{Lem:Siebert} are satisfies and we have
\begin{align*}
|A_{w_i}| &\leq y_0 + 2^kk!\prod_p\left(1-\frac{\omega(p)}{p}\right)\left(1-\frac{1}{p}\right)^{-k}\frac{y}{\log^ky}\left(1-\frac{r_k}{\log y}\right)^{-1}\\
          &\leq 2^kk!\prod_p\left(1-\frac{\omega(p)}{p}\right)\left(1-\frac{1}{p}\right)^{-k}\frac{y}{\log^ky}\left(\frac{\sqrt{\frac{2}{3}}}{2^kk!W_K}\frac{\log^k y}{\sqrt{y}}+\frac{\log y}{r_k-\log y}\right)
\end{align*}
for $y>e^{r_k}$, where $W_k$ is the product over primes. Using an argument similar to that in section $5$ of \cite{Dubbe}, one can show that the term in brackets is bounded by $3$ for $y>e^{2r_k}$.
\end{proof}
 
The following is a reformulation of the Brun-Titchmarsh inequality (see \cite{BT}).
\begin{Lem}
\label{Lem:BT}
We have $|A_w|<\frac{2x}{\varphi(q^{n-m})\log(y)} = 2\frac{q}{\varphi(q)}\frac{y}{\log (y)}$ for $(q,w)=1$.
\end{Lem}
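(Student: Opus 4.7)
The plan is to recognize $A_w$ as a count of primes in a single arithmetic progression and then invoke the Brun--Titchmarsh inequality directly. First I would observe that for each $a \in A_w$, the integer $aq^{n-m} + w$ is a prime lying in $(0, q^n] = (0, x]$, and that the hypothesis $(q, w) = 1$ forces $(w, q^{n-m}) = 1$. Consequently
\[|A_w| \leq \pi(x;\, q^{n-m},\, w),\]
the number of primes $p \leq x$ with $p \equiv w \pmod{q^{n-m}}$.

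Next I would apply Brun--Titchmarsh in its standard form
\[\pi(x;\, Q,\, a) < \frac{2x}{\varphi(Q)\log(x/Q)},\]
valid whenever $(a,Q)=1$ and $Q < x$. Taking $Q = q^{n-m}$ gives $x/Q = q^m = y$, hence
\[|A_w| < \frac{2x}{\varphi(q^{n-m})\log y},\]
which is already the first form appearing in the statement.

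The final step is to rewrite the right-hand side in the second form. I would use the elementary identity $\varphi(q^{n-m}) = q^{n-m-1}\varphi(q)$, which holds because every prime dividing $q^{n-m}$ already divides $q$. Then $x/\varphi(q^{n-m}) = q^n/(q^{n-m-1}\varphi(q)) = q\,y/\varphi(q)$, yielding the stated bound $2(q/\varphi(q))(y/\log y)$. No step presents a real obstacle: the lemma is essentially a transcription of Brun--Titchmarsh into the paper's notation, and the only thing to verify is that the coprimality and size conditions needed to apply the inequality are met.
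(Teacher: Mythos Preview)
Your proposal is correct and is exactly the approach the paper has in mind: the paper gives no proof at all beyond the remark that the lemma is a reformulation of the Brun--Titchmarsh inequality from Montgomery--Vaughan, and your write-up simply fills in the routine translation into the paper's notation. The identity $\varphi(q^{n-m})=q^{\,n-m-1}\varphi(q)$ you use holds for arbitrary $q\ge 2$ (not just prime $q$) for the reason you state, so nothing is missing.
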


Now, every prime $x/y<p<x$ gives rise to exactly one $a$ in exactly one $A_w$, hence, by the prime number theorem $\sum_{w<x/y}|A_w|\geq \sum_{x/\log(x) < p < x} 1\sim\frac{x}{\log x}$, provided that $x\rightarrow\infty$ and $y\geq \log x$. Next, we group together all $A_w$ which account for exactly $k$ different words with $1\leq k < K$ and also those which account for at least $K$ words. Let $N_k$ be the number of the former, $R_K$ the number of the latter sets. Here $K\geq 2$ is a natural number that we will chose later. Note that the number of different sets $A_w$ is bounded below by $N=\sum_{k<K} N_k$, and bounded above by the size of an automaton recognizing the primes in $[1, q^n]$, so it remains to estimate $N$. Note further that we have $A_w \not= \emptyset$ only for words $w$ with $(q,w)=1$, otherwise the prime condition defining this set is never satisfied. We are using Lemma~\ref{Lem:non unique} and Lemma~\ref{Lem:BT} to estimate the size of our sets $A_w$ and obtain
\begin{align}\label{EQ1}
\frac{1}{2}\frac{x}{\log x} & \leq \sum_{k=1}^{K-1} C_kN_k\frac{y}{\log^ky} + C_KR_K\frac{y}{\log^Ky}\notag\\
 & \leq N\left(\sum_{k=1}^{K-1} C_k\frac{y}{\log^ky}\right) + C_K\frac{x}{\log^Ky},
\end{align}
with $C_1 = 2\frac{q}{\varphi(q)}$ and
\[ C_k = 3\cdot2^kk!\prod_p\left(1-\frac{\omega(p)}{p}\right)\left(1-\frac{1}{p}\right)^{-k}\]
for $k\geq 2$, provided that $y>e^{2r_K}$ and $x\geq x_0$ being large enough.
\begin{Lem}\label{const}
For $k\geq 2$ there exist constants $D_1$ and $D_2$ which depend at most on $q$, so that
\[C_k\leq D_1\frac{q}{\varphi(q)}(D_2k\log k)^k\left(\log\log\frac{x}{y}\right)^{k-1}\]
provided that $\frac{x}{y} > \max\{k,e^e\}$ and $\log\frac{x}{y} > q$.
\end{Lem}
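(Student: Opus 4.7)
The plan is to decompose the Euler product
\[ P := \prod_p \left(1-\frac{\omega(p)}{p}\right)\left(1-\frac{1}{p}\right)^{-k} \]
by splitting the primes into ranges and showing that most of them contribute a factor at most $1$. The prefactor $3\cdot 2^k k!$ is bounded by $(C_0 k)^k$ via Stirling, and this will later be absorbed into $(D_2 k\log k)^k$.

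The crucial elementary observation is that whenever $p > k$ and $\omega(p) = k$, Taylor expanding the logarithm yields
\[ \log\!\Bigl(\bigl(1-\tfrac{k}{p}\bigr)\bigl(1-\tfrac{1}{p}\bigr)^{-k}\Bigr) = -\sum_{j\geq 2}\frac{k^j - k}{jp^j} \leq 0, \]
so every such prime contributes a factor at most $1$ to $P$. Since $|w_i - w_j| < x/y$, every prime $p > x/y$ already satisfies $\omega(p) = k$ and is handled this way. The remaining primes split into three classes. First, for $p\mid q$ the assumption $(q,w_i) = 1$ together with $p\mid x/y$ forces $\omega(p) = 0$, contributing $\prod_{p\mid q}(1-1/p)^{-k} = (q/\varphi(q))^k$. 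Second, for $p \leq k$ with $p\nmid q$ one has $\omega(p) \geq 1$, so the factor is at most $(1-1/p)^{-(k-1)}$, and Mertens yields $\prod_{p\leq k}(1-1/p)^{-(k-1)} \leq (C_1\log k)^{k-1}$. Third, for $p > k$ with $p\nmid q$ and $\omega(p) < k$, the prime must divide $N := \prod_{i<j}|w_i-w_j|$, and again the factor is at most $(1-1/p)^{-(k-1)}$.

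The main technical step is this third class. The number $M$ of primes $p > k$ dividing $N$ satisfies $M\log(k+1) \leq \log N \leq \binom{k}{2}\log(x/y)$, so $M \leq k^2\log(x/y)/(2\log k)$. Since $\sum 1/p$ over any set of primes above $k$ is maximized when they are the $M$ smallest such primes, combining Mertens with the prime number theorem yields
\[ \sum_{p\mid N,\, p>k}\frac{1}{p} \leq \log\log p_{\pi(k)+M} - \log\log k + O(1) \leq \log\!\left(1+\frac{\log\log(x/y)}{\log k}\right) + O(1). \]
Exponentiating and combining with the second-class bound produces a factor $\leq \bigl(C_3(\log k + \log\log(x/y))\bigr)^{k-1}$, which is $O((\log\log(x/y))^{k-1})$ when $\log\log(x/y) \geq \log k$; in the borderline regime $\log\log(x/y) < \log k$ the factor is $O((\log k)^{k-1})$ instead, and since $(\log\log(x/y))^{k-1}\geq 1$ (using $x/y > e^e$), one absorbs the discrepancy into the generous $(D_2 k\log k)^k$. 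Finally, $(q/\varphi(q))^k$ yields the outside factor $q/\varphi(q)$, with the residual $(q/\varphi(q))^{k-1}$ absorbed into $(D_2 k\log k)^k$ by letting $D_2$ depend on $q$. The principal obstacle is the third class: one must combine the count $M = O(k^2\log(x/y)/\log k)$ with the fact that these primes all exceed $k$, and then survive the borderline regime where $\log k$ dominates $\log\log(x/y)$ and $(\log\log(x/y))^{k-1}$ is too weak to match the sieve output directly.
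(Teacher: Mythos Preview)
Your argument is correct and follows essentially the same route as the paper: the same decomposition of the Euler product according to $p\mid q$, $p\le k$, and $p>k$ with $\omega(p)<k$; the same observation that factors with $p>k$ and $\omega(p)=k$ are $\le 1$; and the same endgame of bounding the bad primes via their divisibility of $\prod_{i<j}(w_i-w_j)$ and then applying Mertens and Stirling. The only cosmetic difference is that the paper shifts the bad primes above $k$ down to an initial segment $(k,z_0]$ and bounds $z_0$ via $\prod p \le (x/y)^{k^2}$ and the prime number theorem, whereas you count them as $M\le k^2\log(x/y)/(2\log k)$ and bound $\sum 1/p$ directly; both yield the same $(C(\log k+\log\log(x/y)))^{k-1}$, which the paper then handles in one stroke via $\log k+\log\log(x/y)\le C'\log k\cdot\log\log(x/y)$ rather than by your case split.
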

\begin{proof}
First, consider the set of equations $m(x/y) + w_i\equiv 0\;(p)$ for $i=1,\ldots,k$ and with $x/y = q^{n-m}$. Remember that $\omega(p)$ is the number of different solutions $(\bmod{\;p})$ and that we only consider words $w_i$ with $(q,w_i)=1$. Therefore, we have $\omega(p) = 0$ for $p\mid q$ and $\omega(p)\geq 1$ otherwise, since we have no solution in the former case, but at least one in the latter. We also saw in the proof of lemma \ref{Lem:non unique} that we have $\omega(p) = k$ for $p>x/y$.  This gives us
\begin{align}\label{EQ2}
C_k=   A_k &\prod_p\left(1-\frac{\omega(p)}{p}\right)\left(1-\frac{1}{p}\right)^{-k}\notag\\
   =   A_k &\prod_{p\mid q}\left(1-\frac{1}{p}\right)^{-k}\prod_{p\nmid q}\left(1-\frac{\omega(p)}{p}\right)\left(1-\frac{1}{p}\right)^{-k}\notag\\
   \leq A_k& \frac{q}{\varphi(q)}\prod_{p\leq k}\left(1-\frac{1}{p}\right)^{-(k-1)}\prod_{\substack{p > k\\p\mid q}}\left(1-\frac{1}{p}\right)^{-(k-1)}\notag\\
	         & \prod_{\substack{k<p\leq \frac{x}{y}\\ p\nmid q}}\frac{p-\omega(p)}{p-k}\prod_{\substack{p>k \\ p\nmid q}}\left(1-\frac{k}{p}\right)\left(1-\frac{1}{p}\right)^{-k}\notag\\
	 =    A_k& \frac{q}{\varphi(q)}\prod_{p\leq k}\left(1-\frac{1}{p}\right)^{-(k-1)}\prod_{p>k}\left(1-\frac{k}{p}\right)\left(1-\frac{1}{p}\right)^{-k}\notag\\
	         & \prod_{\substack{k<p\leq \frac{x}{y}\\ p\nmid q}}\frac{p-\omega(p)}{p-k}	\prod_{\substack{p > k\\ p\mid q}}\frac{p-1}{p-k}		
\end{align}
with $A_k = 3k!2^k$. Next, we treat the product over primes in the range $k<p\leq\frac x y$ with $p\nmid q$. Clearly, one factor is $>1$ if we have $\omega(p) \neq k$ for the corresponding prime $p$. Comparing the $n$-th largest prime $p_n$ with this property to the $n$-th largest prime $p'_n$ greater than $k$ and with $p\nmid q$ gives us
\[\frac{p_n-\omega(p)}{p_n-k}\leq \frac{p_n-1}{p_n-k}\leq \frac{p'_n-1}{p'_n-k}.\]
Writing $\mathcal{P} = \{k<p\leq x/y\;\mid\;\omega(p)\neq k\;\text{and}\;p\nmid q\}$ and $\mathcal{P}' = \{k < p \leq z_0\;\mid\;p\in\mathbb{P}\;\text{and}\;p\nmid q\}$ with $|\mathcal{P}|= |\mathcal{P}'|$ leads us further to
\[
\prod_{\substack{k<p\leq \frac{x}{y}\\ p\nmid q}}\frac{p-\omega(p)}{p-k} = \prod_{p\in\mathcal{P}}\frac{p-\omega(p)}{p-k} \leq \prod_{p\in\mathcal{P}'}\frac{p-1}{p-k} = \prod_{\substack{k<p\leq z_0\\ p\nmid q}}\frac{p-1}{p-k}.
\]
On the other hand, we have $\omega(p)\neq k$ only if there are words $w_i$ and $w_j$ such that $p\mid w_i-w_j$. Therefore, the product on the left ranges over primes that divide these kind differences. If we set $P_{ij} = \prod_{p\mid w_i-w_j}p$, then $P_{ij} <x/y$ and 
\[
\prod_{\substack{k<p\leq z_0\\ p\nmid q}}p = \prod_{p\in\mathcal{P}'}p \leq \prod_{p\in\mathcal{P}}p\leq \prod_{i<j}P_{ij} < \left(\frac{x}{y}\right)^{k^2}.
\]
Using the prime number theorem in the form $\sum_{p\leq z}\log p \sim z$ for $z\rightarrow\infty$, we get $z_0 < k^2\log(x/y) + k +c$, where $c$ depends at most on $q$. In combination with the condition $\log(x/y)\geq q$ we deduce for (\ref{EQ2}) the following
\begin{align*}
C_k \leq    A_k& \frac{q}{\varphi(q)}\prod_{p\leq k}\left(1-\frac{1}{p}\right)^{-(k-1)}\prod_{p>k}\left(1-\frac{k}{p}\right)\left(1-\frac{1}{p}\right)^{-k}\notag\\
	             & \prod_{k<p\leq k^2\log\frac{x}{y}+k+c}\frac{p-1}{p-k}\\
		=   		A_k& \frac{q}{\varphi(q)}\prod_{p\leq k^2\log\frac{x}{y}+k+c}\left(1-\frac{1}{p}\right)^{-(k-1)}\prod_{p>k^2\log\frac{x}{y}+k+c}\left(1-\frac{k}{p}\right)\left(1-\frac{1}{p}\right)^{-k}\\
		\leq    A_k& \frac{q}{\varphi(q)}\prod_{p\leq k^2\log\frac{x}{y}+k+c}\left(1-\frac{1}{p}\right)^{-(k-1)}
\end{align*}
Now we have $\prod_{p\leq z}\left(1-\frac{1}{p}\right)^{-1} = \mathcal{O}(\log(z))$ according to mertens' theorem and $A_k = \mathcal{O}((c'k)^k)$ according to sterling's formula. The implied constants are in both cases absolute. Thus we have
\begin{align*}
C_k \leq& D'_1\frac{q}{\varphi(q)}(D_2'k)^k\left(\log(k^2\log\frac{x}{y}+k+c)\right)^{k-1}\\
    \leq& D'_1\frac{q}{\varphi(q)}(D_2'k)^k\left(D'_3\log k\log(\log\frac{x}{y})\right)^{k-1}\\
		\leq& D_1(D_2 k\log k)^k\left(\log\log\frac{x}{y}\right)^{k-1}
\end{align*} 
for $\log\log x/y \geq1$.
\end{proof}

\section{conclusion of the proof}

Lemma~\ref{const} allows us to estimate (\ref{EQ1}) by
\begin{align}\label{EQ3}
N \geq \frac{x}{y}\left(\frac{1}{2\log x} - \frac{E(K)\left(\log\log\frac{x}{y}\right)^{K-1}}{\log^Ky}\right)\left(\sum_{k=1}^{K-1}E(k)\frac{\left(\log\log\frac{x}{y}\right)^{k-1}}{\log^ky}\right)^{-1}
\end{align}
for $x\geq x_0$, with the constant
\[ E(k) = D_1(C_0D_2k\log (k+1))^k \]
(the parameter $C_0\geq 1$ will be selected later) and under the conditions
\begin{align}\label{cond}
y > e^{2r_K},\quad y > \log x,\quad \frac x y > \max\{K,e^e\},\quad \log\frac x y > q.
\end{align}
We choose $y = q^m$ with $m = \left\lceil \frac{\log y_0}{\log q}\right\rceil$, where $y_0 = y_0(x)$ is the solution of
\[ \log^K y_0 = 4E(K)\log x\left(\log\log x\right)^{K-1}\]
for a given $x$. Then we have $qy_0 > y \geq y_0$ and $\log^Ky \geq 4E(K)\log x\left(\log\log x\right)^{K-1}$, which gives us the following for (\ref{EQ3})
\begin{align*}
N &\geq \frac{x}{y}\frac{1}{4\log x}\left(\sum_{k=1}^{K-1}E(k)\frac{\left(\log\log\frac{x}{y}\right)^{k-1}}{\log^ky}\right)^{-1}\\
  &= \frac{x}{y}\left(\sum_{k=1}^{K-1}4E(k)\frac{\log x\left(\log\log\frac{x}{y}\right)^{k-1}}{\log^ky}\right)^{-1}.
\end{align*}
Furthermore, our choice of $y$ enables us to estimate the terms in the sum on the right through
\begin{align*}
4E(k)&\frac{\log x(\log\log\frac{x}{y})^{k-1}}{\log^k y} \leq 4E(k) \frac{\log x}{\log\log x}\left(\frac{\log\log x}{\log y}\right)^k\\
&\leq 4E(k)\frac{\log x}{\log\log x}\left(\frac{\log\log x}{4E(K)\log x}\right)^{\frac k K } \\
&= (4D_1)^{1-\frac k K}\left(\frac{k\log(k+1)}{K\log(K+1)}\right)^k\frac{\log x}{\log\log x}\left(\frac{\log\log x}{\log x}\right)^{\frac k K }\\
&< 4D_1 \frac{\log x}{\log\log x}\left(\frac{\log\log x}{\log x}\right)^{\frac k K }.
\end{align*}
Therefore, the sum is bounded by
\begin{align*}
\sum_{k=1}^{K-1}4E(k)\frac{\log x\left(\log\log\frac{x}{y}\right)^{k-1}}{\log^ky} < E_0 K \left(\frac{\log x}{\log\log x}\right)^{1-\frac{1}{K}}
\end{align*}
with $E_0 = 4D_1$. Combing this with 
\[ y\leq q y_0 = q\exp\left(\left(E(K)\frac{\log x}{\log\log x}\right)^{\frac 1 K}\log\log x\right) \]
leads us to 
\begin{align}\label{lastEQ}
N > \frac{q}{E_0K}\left(\frac{\log\log x}{\log x}\right)^{1-\frac 1 K} x \exp\left(-\left(E(K)\frac{\log x}{\log\log x}\right)^{\frac 1 K}\log\log x\right)
\end{align}
for $x\geq x_0$ and under the conditions (\ref{cond}). Finally, we choose
\[ K = \left\lceil \log\log x\right\rceil.\]
with $x_0\geq\exp(\exp(2))$ such that $K\geq 2$. Of course, we must make sure that this choice does not contradict our other requirements. First, we notice that $y > e^{2r_K}$ follows if the inequality 
\[\left(\frac{\log x}{\log\log x}\right)^{\frac 1 K}\log\log x > \frac{2r_K}{E(K)^{\frac 1 K}}=\frac{2r_K}{(D_1(C_0 D_2K\log(K+1))^K)^{\frac 1 K}}\]
holds for $x\geq x_0$. This is certainly true, if the parameter $C_0$ is sufficiently large. The other conditions are straight forward, provided that $x_0$ is large enough. Inserting our choice into (\ref{lastEQ}) gives us
\[ N > C\frac{x}{\log x}\exp\left(-C'(\log\log x)^2\log\log\log x \right) \]
for $x\geq x_0$ and our Theorem follows. 

\section{Further results}

A similar, but much easier approach works for the set of squares.
\begin{Theo}
Let $q$ be an odd prime. Then the $q$-automaticity  of the set of squares is $\asymp x^{1/2}$.
\end{Theo}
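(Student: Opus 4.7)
The plan is to prove matching bounds $A(x) \gg x^{1/2}$ and $A(x) \ll x^{1/2}$. The lower bound is a direct and substantially simpler analogue of the Myhill--Nerode argument from the primes case, while the upper bound comes from an explicit two-phase automaton construction exploiting the sparsity of squares.

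For the lower bound, set $x = q^n$ with $n$ even and, for each word $w$ of length $n/2$, define
\[
B_w = \{a : 0 \leq a < q^{n/2}, \ aq^{n/2} + w \text{ is a perfect square}\}.
\]
As in the introduction, the number of distinct $B_w$ is a lower bound on the automaton's state count. Call $a \in B_w$ \emph{large} if $a \geq q^{n/2}/4$, so that the corresponding root $s = \sqrt{aq^{n/2}+w}$ satisfies $s \geq q^{n/2}/2$. The argument rests on two observations, both relying only on the fact that units modulo $q^{n/2}$ have exactly two square roots when $q$ is an odd prime. First, each $w$ coprime to $q$ contains at most one large element in $B_w$: two such would yield distinct roots $s_1, s_2 \geq q^{n/2}/2$ with $s_1^2 \equiv s_2^2 \pmod{q^{n/2}}$, forcing $s_1 + s_2 = q^{n/2}$, which is impossible. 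Second, no $a$ can be a large element of both $B_w$ and $B_{w'}$ for distinct $w, w'$: the equation $(s-t)(s+t) = w - w'$ with $s + t \geq q^{n/2} > |w - w'|$ forces $s = t$. Thus $w \mapsto (\text{its large element, when defined})$ is a partial injection whose domain contains one $w$ for each square $s^2$ with $s \in [q^{n/2}/2, q^{n/2})$ and $(s, q) = 1$, of which there are $\gg q^{n/2}$. Hence the sets $B_w$ take at least $\gg q^{n/2} = x^{1/2}$ distinct values.

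For the upper bound, I would build an MSB-first DFA with $O(q^{n/2})$ states. During the first $n/2$ digits, the state simply records the prefix value $v$, costing $\sum_{k \leq n/2} q^k = O(q^{n/2})$ states. For $v \geq q^{n/2}/\sqrt{2}$ the interval $[vq^{n/2}, (v+1)q^{n/2})$ contains at most one square, since consecutive squares there differ by more than $q^{n/2}$; the second-phase state is then merely the expected low-digit tail of the unique candidate, so at level $n/2 + j$ there are at most $q^{n/2-j}$ distinct states, summing to $O(q^{n/2})$ more. The remaining low-$v$ prefixes, where several squares may fit, can be handled by branching on each candidate and then again merging by expected tail, adding only another $O(q^{n/2})$ states.

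The main obstacle is justifying the tail-merging step in the upper bound, that is, verifying that two prefixes whose candidates share the same remaining low digits really are Myhill--Nerode equivalent on all continuations, and checking that the handful of low-$v$ prefixes does not inflate the state count. Once the elementary gap-between-squares estimate is in place, these checks reduce to routine counting. The lower bound, by contrast, is almost entirely elementary, resting on the fact that $s \mapsto s^2 \bmod q^{n/2}$ is at most two-to-one on units of $\Z/q^{n/2}\Z$, and it delivers on the paper's promise that the squares case is ``much easier'' than the sieve-based analysis for primes.
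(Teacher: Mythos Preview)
Your lower bound is essentially the paper's argument: both read the low $n/2$ digits $w$ first and use that units in $\Z/q^{n/2}\Z$ have exactly two square roots, so each $B_w$ is tiny and the $\asymp q^{n/2}$ squares in the top range force $\gg q^{n/2}$ distinct sets. Your ``large element'' device makes the distinctness step more explicit than the paper, which simply notes $|A_w|\le 2$ and then asserts one can extract a large family of pairwise distinct $A_w$; your partial injection via Observations~1 and~2 is a cleaner way to pin this down.

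The upper bounds diverge. The paper stays in the LSB-first convention used throughout and proves an \emph{algebraic} refinement: after reading $m+1$ low digits (not $m$), one has $|A_w|\le 1$, because for $u^2\equiv w\pmod{q^m}$ the $(m{+}1)$-st digits of $u^2$ and $(q^m-u)^2$ differ. From there a tree for the first $m+1$ digits glued to a reversed tree for the remaining $m-1$ digits gives $O(q^m)$ states. Your construction instead reads MSB-first and is \emph{analytic}: once the prefix $v$ is large enough, the gap $2\sqrt{vq^{n/2}}>q^{n/2}$ forces at most one square in the window, and tails can be merged. Both routes yield $O(x^{1/2})$; the paper's buys a clean one-line reduction to $|A_w|\le 1$ at the cost of a digit-level parity argument, while yours avoids any algebra modulo $q^{m}$ but needs the extra bookkeeping for small prefixes (which, as you note, still sums to $O(q^{n/2})$ since at depth $n/2+j$ there are only $O(q^{n/2-j})$ of them).

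One caveat: your lower bound is stated for the LSB-first reading (you feed $w$ first and look at continuations $a$), while your upper bound builds an MSB-first automaton. As written, the two halves bound different automaticity functions. This is easy to repair---your Observations~1 and~2 symmetrically show that for $a\ge q^{n/2}/4$ the sets $\{w:aq^{n/2}+w\text{ is a square}\}$ are singletons and pairwise distinct, giving the MSB-first lower bound---but you should either align the reading direction or say a word about why both versions give $\asymp x^{1/2}$.
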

\begin{proof}
For the lower bound put $n=2m$, and consider an initial word $w$ of length $m$ coprime to $q$. If $a\in A_w$, then $a=u^2\equiv w\pmod{q^m}$. The multiplicative group $(\Z/q^m\Z)^*$ is cyclic, hence, there are exactly two choices for $u$. Each choice of $u\pmod{q^m}$ leads to exactly one element in $A_w$, as $(u\bmod q^m)^2<q^{2m}$, $(u\bmod{q^m}+q^m)^2>q^{2m}$. We conclude that $|A_w|\leq 2$.

Now consider the squares in the range $[q^{n-1}, q^n-1]$ coprime to $q$. There are $q^{n/2}-q^{(n-1)/2}+\mathcal{O}(1)$ squares of this type. Each of these squares is contained in some $A_w$, and each $|A_w|$ is bounded, hence, we can pick a set $\mathcal{W}$ of words, such that for $w\neq w'\in\mathcal{W}$ we have $A_w\neq A_{w'}$, for all $w\in\mathcal{W}$ there exists a square in $[q^{(n-1)}, q^n-1]$ ending with $w$, all squares in this interval are convered in this way, and $|\mathcal{W}|>cq^m$.

For the upper bound we first construct an automaton recognizing the squares not divisible by $q$. If $w$ is a word of length $m$, not beginning with 0, which is a quadratic residue, we have $|A_w|=2$, and if $u\in[1, q^m-1]$ satisfies $u^2\equiv w\pmod{q^m}$, then $A_ww=\{u^2, (q^m-u)^2\}$. We have $(q^m-u)^2 = q^{2m}-2q^mu+u^2$, hence, the $m+1$-st digit of $(q^m-u)^2$ coincides with the $m+1$-st digit of $u^2$ if and only if the first digit of $2u$ is 0. But this contradicts the assumption that $w$ is not divisible by $q$. We conclude that for all words of length $m+1$ we have $|A_w|\leq 1$. We now encode the words of length $m+1$ into a binary tree, and remove all leaves with $A_w=\emptyset$. We now encode all possible extensions, i.e. words of length $\leq m-1$ into a second tree, where we reverse the transition map and declare the root as unique accepting state. In other words, we create an automaton such that starting in the state representing the integer $a$ leads to an accepting state if $a$ is read, and to a fail state otherwise. We now identify each leaf $w$ of the first tree with the unique node $a$ such that $A_w=\{a\}$ of the second tree, and obtain an automaton recognizing squares coprime to $q$. We now add three states dealing with integers divisible by $q$.
\end{proof}
In the reverse direction we have the following.

\begin{Theo}
The automaticity of the set of primes is at most $\mathcal{O}\left(\frac{x}{\log x\log\log\log  x}\right)$.
\end{Theo}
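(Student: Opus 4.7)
The plan is to construct, for each sufficiently large $x$, a deterministic automaton of size $\mathcal{O}(x/(\log x\log\log\log x))$ that correctly decides primality on the inputs $N\leq x$. Set $n=\lceil\log_q x\rceil$ and choose $m$ so that $y=q^m\asymp\log x\log\log\log x$; then $x/y\asymp x/(\log x\log\log\log x)$ is the target state count. Read digits most-significant first and split each input as $N=uq^m+v$ with $u$ of length $n-m$ (the head) and $v$ of length $m$ (the tail).

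The head phase is a trie of depth $n-m$ indexed by $u$; its $q^{n-m}$ states already hit the target, so the rest of the work is to add as few tail states as possible. From each head leaf we have to accept exactly the set $B_u:=\{v<y:uq^m+v\text{ prime}\}$. Lemma~\ref{Lem:BT} (Brun--Titchmarsh) gives $|B_u|=\mathcal{O}((q/\varphi(q))y/\log y)=\mathcal{O}(\log\log\log x)$ for $u$ coprime to $q$, so every $B_u$ is an extremely sparse subset of $\{0,\ldots,y-1\}$. Rather than attach a separate tail trie of size $\mathcal{O}(m|B_u|)$ to each head (which would cost on the order of $\pi(x)\cdot m$ states in total, far too many), the plan is to share tail states across heads by building a single deterministic DAG that simultaneously recognises all the $B_u$'s, with each head leaf identified with an appropriate start state of the DAG.

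I count the total number of states depth by depth. For $j\leq n-m$ the head contributes the geometric sum $\sum_{j\leq n-m}q^j=\mathcal{O}(q^{n-m})$. For $j>n-m$ the number of distinct right-languages among length-$j$ prefixes is the number of states at that depth; using the sparsity of the $B_u$'s together with the observation that most partial tails are compatible with only $\mathcal{O}(1)$ surviving heads, this tail contribution should also come out as $\mathcal{O}(q^{n-m})$, yielding the claimed bound.

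The main obstacle is this last estimate. The trivial Myhill--Nerode count at the natural crossover depth $j^{*}\approx n-\log_q\log x$ only gives $\mathcal{O}(\pi(x))=\mathcal{O}(x/\log x)$, falling short of the target by exactly a factor of $\log\log\log x$. Extracting this missing factor relies on the fact that a typical head has $|B_u|=\Theta(\log\log\log x)$, so at the crossover many heads agree on their right-languages and the effective number of distinct classes drops from $\pi(x)$ to $q^{n-m}$. Making this averaging rigorous is the technical heart of the construction, and is where I expect another application of Brun--Titchmarsh together with a counting argument over pairs of heads to be needed.
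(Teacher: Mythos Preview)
Your framework---a head trie of depth $n-m$ followed by a shared tail automaton indexed by the distinct continuation sets $B_u$---is the same as the paper's, but your plan stops exactly where the work begins. The decisive step is to bound the \emph{number of distinct} sets $B_u$, and ``averaging via Brun--Titchmarsh and counting pairs of heads'' does not supply such a bound. Brun--Titchmarsh controls $|B_u|$ for each individual $u$; it says nothing about how many $u$'s share the same $B_u$, and you give no mechanism by which primes in the disjoint short intervals $[uq^m,(u+1)q^m)$ should produce coinciding tail sets often enough to save a factor $\log\log\log x$. (Incidentally, the claim $|B_u|=\mathcal{O}(\log\log\log x)$ is wrong as an upper bound: with $y\asymp\log x\log\log\log x$ the sieve only gives $|B_u|=\mathcal{O}(y/\log y)\asymp(\log x\log\log\log x)/\log\log x$; the quantity $y/\log x\asymp\log\log\log x$ is merely the \emph{average} of $|B_u|$, which does not help here.)

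What is missing is a direct combinatorial bound on $|\{B_u\}|$ coming from the sieve structure of primes. If $uq^m+v$ is prime and exceeds a small prime $p\nmid q$, then $v\not\equiv -uq^m\pmod p$; hence every $B_u$ lies inside a single shift (depending on $u\bmod Q$) of the set of integers in $[0,y)$ coprime to $Q=\prod_{p<z,\,p\nmid q}p$. There are only $Q$ possible shifts and at most $2^{(\lfloor y/Q\rfloor+1)\varphi(Q)}$ subsets of such a shifted set, so choosing $z$ maximal with $Q<y$ gives $|\{B_u\}|\le Q\cdot 2^{\mathcal{O}(y\,\varphi(Q)/Q)}=2^{\mathcal{O}(y/\log\log y)}$. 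Taking $y=q^m$ to be a sufficiently small constant times $\log x\log\log\log x$ makes $|\{B_u\}|\cdot q^m\le q^{n-m}$, and then the crude count $q^{n-m}+|\{B_u\}|\cdot q^m$ already yields $\mathcal{O}(x/(\log x\log\log\log x))$. No averaging and no analysis at intermediate depths is needed.
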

\begin{proof}
We construct an automaton that recognizes all primes of length $\leq n$ as follows. Let $m$ be a parameter to be chosen later. We read the first $n-m$ digits using a $q$-ary tree. Let $\mathcal{A}$ be a complete list of all $A_w$, $w\leq q^{n-m}$. Then we use for each element in this list an automaton with $q^m$ states and obtain that the automaticity is $\mathcal{O}(q^{n-m}+q^m|\mathcal{A}|)$. The trivial bound $|\mathcal{A}|\leq 2^{q^m}$ leads to the bound $\frac{x}{\log x}$, which is valid for any set. To improve this bound we have to give a non-trivial bound for the number of possible sets $A_w$. 

Let $y<q^{n-m}$ be determined later. Let $Q$ be the product of all prime numbers $p<y$ which do not divide $q$. Then each $A_w$ is contained in a set that can be shifted into a set of integers coprime to $Q$. There are $Q$ possible shifts. The number of possible subsets of integers coprime to $Q$ is bounded by $2^{\left(\lfloor\frac{q^m}{Q}\rfloor+1\right)\varphi(Q)}$. We conclude that the automaticity is bounded by
\[
q^{n-m}+q^m 2^{\left(\lfloor\frac{q^m}{Q}\rfloor+1\right)\varphi(Q)}
\]
We choose $y$ maximal subject to the condition $Q<q^m$. By the prime number theorem we can guarantee $Q>q^{m/2}$. Then the automaticity is bounded by
\[
q^{n-m}+q^m 2^{2q^m\frac{\varphi(Q)}{Q}}\leq q^{n-m} + 2^{\mathcal{O}\left(\frac{q^m}{\log\log Q}\right)}\leq q^{n-m} + 2^{\mathcal{O}\left(\frac{q^m}{\log\log q^m}\right)}.
\]
The second summand is negligible, provided that $\frac{q^m}{\log\log q^m}<cn$ for some small constant $c$. Thus, we can take $q^m\asymp n\log\log n$, and our result follows.
\end{proof}

\end{document}